\newtheoremstyle{thm}
{9pt}
{9pt}
{\itshape}
{}
{\bfseries}
{.}
{ }
{}
\theoremstyle{thm}
\newtheorem{theorem}{Theorem}[section]
\newtheorem{lemma}[theorem]{Lemma}
\newtheoremstyle{def}
{9pt}
{9pt}
{}
{}
{\bfseries}
{.}
{ }
{}
\theoremstyle{def}
\newtheorem{remark}[theorem]{Remark}
\newtheorem{example}[theorem]{Example}
\newcommand{\R}{\mathbb{R}} 
\newcommand{\N}{\mathbb{N}} 
\newcommand{\E}{\mathbb{E}} 
\newcommand{\PP}{\mathbb{P}} 
    \def\cd{\stackrel{\mathcal{D}}{\longrightarrow}}
    \def\cp{\stackrel{\mathcal{\PP}}{\longrightarrow}}
\renewcommand{\footnoterule}{%
	\kern -3.5pt
	\hrule width \textwidth height 1pt
	\kern 3.5pt
}
\renewcommand{\footnoterule}{%
	\kern -3.5pt
	\hrule width \textwidth height 1pt
	\kern 3.5pt
}
\def\blfootnote{\xdef\@thefnmark{}\@footnotetext}
\title{The test of exponentiality based on the mean residual life function revisited}
\author{Bruno Ebner\\
 Institute of Stochastics, \\
Karlsruhe Institute of Technology (KIT), \\
Englerstr. 2, 76133 Karlsruhe, \\
Germany\\
\href{mailto:Bruno.Ebner@kit.edu}{Bruno.Ebner@kit.edu}}
\date{\today}
\begin{document}

\maketitle

\blfootnote{ {\em MSC 2010 subject
classifications.} Primary 62G10 Secondary 62E20}
\blfootnote{
{\em Key words and phrases} Goodness-of-fit; exponential distribution; characterisation; Bahadur efficiency; fixed alternatives}

\begin{abstract}
We revisit the family of goodness-of-fit tests for exponentiality  based on the mean residual life time proposed by \cite{BH:2008}. We motivate the test statistic by a characterisation of \citet{S:1070} and provide an alternative representation, which leads to simple and short proofs for the known theory and an easy to access covariance structure of the limiting Gaussian process under the null hypothesis. Explicit formulas for the eigenvalues and eigenfunctions of the operator associated with the limit covariance are given using results on weighted Brownian bridges. In addition we derive further asymptotic theory under fixed alternatives as well as approximate Bahadur efficiencies, which provide an insight into the choice of the tuning parameter with regard to the power performance of the tests.    \end{abstract}

\section{Introduction}\label{sec:Intro}
We revisit the family of tests for exponentiality based on the residual life time as proposed by \cite{BH:2008}, and we provide further theoretical insight into the asymptotic behaviour of the test under alternatives. The problem of interest is testing the assumption that data are distributed according to the exponential distribution with unknown scale parameter. To be precise, let $X$ be a positive random variable and we write shorthand $X\sim\mbox{Exp}(\lambda)$, $\lambda > 0$, if $X$ follows an exponential distribution with scale parameter $\lambda$, hence the density is given by
\begin{equation}\label{eq:density}
f(x,\lambda) = \lambda\exp(-\lambda x),\quad x>0.
\end{equation}
Note that $X \sim$ $\mbox{Exp}(\lambda)$ if, and only if, $X/\lambda \sim$ $\mbox{Exp}(1)$, which shows that the exponential distribution belongs to the scale family of distributions, for a detailed discussion see \cite{JKB:1994}, Chapter 19. In the following we denote the family of exponential distributions by $\mathcal{E}:=\{\mbox{Exp}(\lambda):\,\lambda>0\}$. Let $X, X_1, X_2, \dotso$ be positive independent and identically distributed (iid.) random variables with distribution $\mathbb{P}^X$ defined on an underlying probability space $(\Omega,\mathcal{A},\mathbb{P})$. We test the composite hypothesis
\begin{equation}\label{eq:H0}
H_0:\;\mathbb{P}^X\in\mathcal{E}
\end{equation}
against general alternatives, based on the sample $X_1,\ldots,X_n$.

This testing problem has been studied extensively in the literature, and it is of ongoing interest, see \cite{CMO:2021,CMNO:2020,JMO:2020,MO:2016b,PWG:2022,VG:2020} for some recent related publications, \cite{A:1990,HM:2005,S:1984} for surveys on the topic, and \cite{ASSV:2017,OMO:2022} for reviews of testing procedures as well as of extensive competitive Monte Carlo power studies. The focus of this article is on the procedure proposed in \cite{BH:2008}. That paper studies a family of tests of exponentiality based on the mean residual life function with test statistic
\begin{equation*}
    G_{n,a}=n\int_{0}^\infty \left[\frac1n\sum_{j=1}^n\min(Y_{j},z)-\frac1n\sum_{j=1}^n\mathbf{1}\{Y_{j}\le z\}\right]^2\exp(-az)\,\mbox{d}z,\quad a>0.
\end{equation*}
Here, $Y_j=X_j/\overline{X}_n$, $j=1,\ldots,n$, is the scaled data, where $\overline{X}_n=\frac1n\sum_{j=1}^n X_j$. This family of tests is an extension of the test in \cite{BH:2000} insofar as the exponential weight function in the original test statistic is replaced with a more flexible weight function $w(z)=e^{-az}$, $z>0$, which depends on some so-called tuning parameter $a>-1$. In \cite{BH:2008} the authors provide the limiting null distribution of the test statistics $G_{n,a}$ and a simulation study including simulated critical values, as well as a power study for different values of the tuning parameter $a$. Further insight on the behaviour of the tests under alternatives is hitherto missing. We fill this gap in the literature.

We start our investigation by providing an alternative representation of $G_{n,a}$ motivated by a characterisation of the exponential law due to \citet{S:1070}, which is obviously related to the mean residual life function. \citet{S:1070}
states that for a random variable $X$ with $\mathbb{P}(X>y)>0$ for $y>0$, we have $X\sim\mbox{Exp}(\lambda)$ if and only if
\begin{equation*}
\mathbb{E}[X|X>y]=y+\frac1\lambda,\quad \mbox{for all } y>0, \lambda>0.
\end{equation*}
Direct calculations show that under the same condition this characterisation can be restated in the following way: $X\sim\mbox{Exp}(\lambda)$ if and only if
\begin{equation}\label{eq:Char}
\mathbb{E}\left[\left(X-y-\frac1\lambda\right)\mathbf{1}\{X>y\}\right]=0,\quad \mbox{for all } y>0, \lambda>0.
\end{equation}
In view of the scale invariance of the family $\mathcal{E}$, we consider the scaled data $Y_j=X_j/\overline{X}_n$, $j=1,\ldots,n$, and propose the weighted $L^2$-type test statistic (fixing w.l.o.g. $\lambda=1$ in the characterisation)
\begin{equation}\label{eq:Tna}
T_{n,a}=T_{n,a}(Y_1,\ldots,Y_n)=n\int_0^\infty\left|\frac1n\sum_{j=1}^n(Y_j-y-1)\mathbf{1}\{Y_j>y\}\right|^2\exp(-a y)\,\mbox{d}y,
\end{equation}
which depends on the tuning parameter $a\ge0$. Note that $T_{n,a}$ is scale invariant, i.e. invariant w.r.t. transformations of the form $x\mapsto b x$, $b>0$, since it only depends on the scaled data $Y_1,\ldots,Y_n$. Scale invariance is indeed a desirable property, since the family $\mathcal{E}$ is closed under such transformations.
Putting $x\wedge y=\min(x,y)$ for real numbers $x,y$ and using $n^{-1}\sum_{j=1}^nY_j=1$, some algebra shows 
\begin{equation}\label{eq:expl}
T_{n,a}=\frac1n\sum_{j,k=1}^n\frac1a(|Y_j-Y_k|-1)e^{-a Y_j\wedge Y_k}+\frac1{a^2}(Y_j+Y_k-2(Y_j\wedge Y_k+1))e^{-a Y_j\wedge Y_k}-\frac2{a^3}\left(e^{-a Y_j\wedge Y_k}-1\right),
\end{equation}
for each $a>0$, as well as 
\begin{equation*}
T_{n,0}=\frac1n\sum_{j,k=1}^n\frac13(Y_j\wedge Y_k)^3-\frac{Y_j+Y_k-2}{2}(Y_j\wedge Y_k)^2+(Y_j-1)(Y_k-1)(Y_j\wedge Y_k).
\end{equation*}
An efficient implementation of \eqref{eq:expl} in the statistical computing language \texttt{R}, see \cite{R:2021}, can be found in Appendix \ref{app:SC}.

\begin{remark}\label{rem:Intro} 
\begin{enumerate}
    \item Some algebra shows that $T_{n,a}\equiv G_{n,a}$ holds for all $a>-1$, hence both test statistics are identical.
    
    \item Representation \eqref{eq:Tna} leads to a new compact and formula of the covariance kernel of the limiting Gaussian process, which admits direct conclusions for the limiting distribution of the test statistic.
    
    \item Obviously the restriction $a\ge0$ can be generalised to negative values. In this article we mostly focus on the positive half axis, since the power results in the simulation study of \cite{BH:2008} suggest that negative tuning parameters lead to low power of the tests. The authors suggest to use the tuning parameter $1\le a\le 2$ as a generally good choice.
    
    \item Note that the test statistic $T_{n,0}$ is related to the test of exponentiality based on the integrated distribution function proposed in \cite{K:2001}, Section 2. The main difference in the definitions of the tests lies in the fact, that $\mathbb{P}(X>y)$ in \eqref{eq:Char} is estimated by and not replaced with the theoretical known value under $H_0$. This difference has an impact on the limiting distribution and the power of the tests as is shown in the sequel.
\end{enumerate} 
\end{remark}

The rest of the paper is organised as follows. In Section \ref{sec:Limit dist} we provide a simple and direct proof of the asymptotic limiting distribution of $T_{n,a}$ under the null hypothesis and provide the first four cumulants of the limiting null distribution as well as explicit formulas of the eigenvalues and eigenfunctions associated with the integral operator induced by the covariance structure of the limiting Gaussian process. These findings corroborate the results of \cite{BH:2008} in a very short and direct way.
In Section \ref{sec:Limit_dist_fixed_alt} we provide new limit results under fixed alternatives as well as a proof of the consistency of the testing procedures. Local approximate Bahadur efficiencies are deduced and interpreted in connection to empirical power results in Section \ref{sec:ABE}. We conclude the article by pointing out open problems for further research in Section \ref{sec:conc} and by an Appendix containing explicit formulas and some source code.

\section{Limiting distribution under the null hypothesis}\label{sec:Limit dist}
In this section we assume that $X_1,X_2,\ldots$ is an iid. sequence of random variables with $X_1\sim\mbox{Exp}(1)$. Due to the $L^2$-structure of the test statistics, a convenient setting --is in dependence of the constant $a\ge0$-- the separable Hilbert space $\mathbb{H}_a=L^2([0,\infty),\mathcal{B},\exp(-at){\rm d}t)$ of (equivalence classes of) measurable functions $f:[0,\infty) \rightarrow \R$
satisfying $\int_0^\infty |f(t)|^2 \exp(-at)\, {\rm d}t < \infty$. Here, $\mathcal{B}$ stands for the Borel sigma-field on $[0,\infty)$. The scalar product and the norm in $\mathbb{H}_a$ will be denoted by
\begin{equation*}
\langle f,g \rangle_{\mathbb{H}_a} = \int_0^\infty f(t)g(t)\,\exp(-at)\, {\rm d}t, \quad \|f\|_{\mathbb{H}_a} = \langle f,f \rangle_{\mathbb{H}_a}^{1/2}, \quad f,g \in \mathbb{H}_a,
\end{equation*}
respectively. 

A change of variable shows that
\begin{equation*}
T_{n,a}=\overline{X}_n^{-3}\int_0^\infty Z_n^2(y)\exp(-ay/\overline{X}_n)\,\mbox{d}y,
\end{equation*}
where 
\begin{equation*}
Z_n(y)=\frac1{\sqrt{n}}\sum_{j=1}^n(X_j-y-\overline{X}_n)\mathbf{1}\{X_j>y\},\quad y>0.
\end{equation*}
Define the auxiliary process
\begin{equation*}
Z_n^\ast(y)=\frac1{\sqrt{n}}\sum_{j=1}^n(X_j-y-1)\mathbf{1}\{X_j>y\}-(X_j-1)\mathbb{P}(X_1>y),\quad y>0.
\end{equation*}
Notice that $Z_n^*$ is a sum of centred iid. random variables, and we have $\mathbb{E}[Z_1^*(s)Z_1^*(t)]=K_0(s,t)$, $s,t>0$, where $K_0$ defined below. By the central limit theorem in Hilbert spaces there exists a centred Gaussian process $Z$ in $\mathbb{H}_a$ having covariance kernel 
\begin{equation}\label{eq:K0}
    K_0(s,t)=\exp(-s\vee t)-\exp(-(s+t)),\quad s,t>0,
\end{equation}
such that $Z_n^*\cd Z$ in $\mathbb{H}_a$ as $n\to\infty$. Here and in what follows, we write $s\vee t=\max(s,t)$, and $\cd$ denotes convergence in distribution. Likewise $\cp$ will denote convergence in probability. Note that $K_0$ is an alternative representation of the limiting covariance kernel $\rho$ in (7) of \cite{BH:2008}.  

\begin{theorem}\label{thm:AsyDist}
Under the standing assumptions, there exists a centred Gaussian process $Z$ in $\mathbb{H}_a$ with covariance kernel $K_0$ defined in \eqref{eq:K0}, such that 
\begin{equation*}
T_{n,a}\cd \|Z\|_{\mathbb{H}_a}^2,\quad\mbox{as}\,n\rightarrow\infty.
\end{equation*}
\end{theorem}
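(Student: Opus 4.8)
The plan is to prove the statement by combining the Hilbert-space central limit theorem already recorded above with the continuous mapping theorem and a Slutsky-type argument. Since the map $f\mapsto\|f\|_{\mathbb{H}_a}^2$ is continuous on $\mathbb{H}_a$, the convergence $Z_n^\ast\cd Z$ in $\mathbb{H}_a$ immediately yields $\|Z_n^\ast\|_{\mathbb{H}_a}^2\cd\|Z\|_{\mathbb{H}_a}^2$. It therefore suffices to show that $T_{n,a}$ and $\|Z_n^\ast\|_{\mathbb{H}_a}^2$ differ by a term that vanishes in probability; the conclusion then follows from Slutsky's lemma, and no separate computation of the limit law is required once the gap $T_{n,a}-\|Z_n^\ast\|_{\mathbb{H}_a}^2\cp0$ is established.

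To control this gap I would split it into two independent replacements. The first replaces the process $Z_n$ by its centred iid counterpart $Z_n^\ast$: using $n^{-1}\sum_{j=1}^nX_j=\overline{X}_n$ and $\PP(X_1>y)=\mathrm{e}^{-y}$, a direct calculation gives the exact pointwise identity
\begin{equation*}
Z_n(y)-Z_n^\ast(y)=\sqrt{n}\,(\overline{X}_n-1)\bigl(\mathrm{e}^{-y}-\widehat{F}_n(y)\bigr),\qquad y>0,
\end{equation*}
where $\widehat{F}_n(y)=n^{-1}\sum_{j=1}^n\mathbf{1}\{X_j>y\}$ is the empirical survival function. By the central limit theorem $\sqrt{n}\,(\overline{X}_n-1)=O_{\PP}(1)$, while the Glivenko--Cantelli theorem gives $\sup_{y\ge0}|\mathrm{e}^{-y}-\widehat{F}_n(y)|\to0$ almost surely, so that $\|Z_n-Z_n^\ast\|_{\mathbb{H}_a}=O_{\PP}(1)\cdot o(1)=o_{\PP}(1)$. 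Combined with $\|Z_n^\ast\|_{\mathbb{H}_a}=O_{\PP}(1)$ and the elementary inequality $\bigl|\,\|Z_n\|_{\mathbb{H}_a}^2-\|Z_n^\ast\|_{\mathbb{H}_a}^2\,\bigr|\le\|Z_n-Z_n^\ast\|_{\mathbb{H}_a}\bigl(\|Z_n\|_{\mathbb{H}_a}+\|Z_n^\ast\|_{\mathbb{H}_a}\bigr)$, this gives $\|Z_n\|_{\mathbb{H}_a}^2=\|Z_n^\ast\|_{\mathbb{H}_a}^2+o_{\PP}(1)$.

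The second, and genuinely delicate, replacement concerns the weight. Writing $T_{n,a}=\overline{X}_n^{-3}\int_0^\infty Z_n^2(y)\exp(-ay/\overline{X}_n)\,\mathrm{d}y$, I would compare it with $\|Z_n\|_{\mathbb{H}_a}^2=\int_0^\infty Z_n^2(y)\exp(-ay)\,\mathrm{d}y$ using $\overline{X}_n\to1$ almost surely. The point to watch is that the quotient of the two weights, $\overline{X}_n^{-3}\exp\!\bigl(-ay(\overline{X}_n^{-1}-1)\bigr)$, is not uniformly close to $1$ in $y$ when $\overline{X}_n<1$, so a crude uniform bound fails for large $y$. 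I would therefore split the integral at a threshold $L$: on $[0,L]$ the two weights converge uniformly and $\int_0^LZ_n^2(y)\,\mathrm{d}y=O_{\PP}(1)$ for fixed $L$, so this contribution is $o_{\PP}(1)$; on $[L,\infty)$ I would dominate $\exp(-ay/\overline{X}_n)$ by $\exp(-a(1-\eta)y)$ on the event $\{|\overline{X}_n-1|\le\varepsilon\}$ and invoke a uniform-in-$n$ second-moment bound $\E[Z_n^2(y)]\le C\,\mathrm{e}^{-y}$, which follows from $\E[(Z_1^\ast(y))^2]=K_0(y,y)$ together with the first replacement, to make the tail arbitrarily small uniformly in $n$. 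Letting $n\to\infty$ and then $L\to\infty$, $\varepsilon\to0$ yields $T_{n,a}=\|Z_n\|_{\mathbb{H}_a}^2+o_{\PP}(1)$, and chaining the two replacements gives $T_{n,a}=\|Z_n^\ast\|_{\mathbb{H}_a}^2+o_{\PP}(1)\cd\|Z\|_{\mathbb{H}_a}^2$. I expect this weight/tail control to be the main obstacle, as it is the only place where the scale estimation $\overline{X}_n$ interacts nontrivially with the infinite integration range; everything else is a routine consequence of the Hilbert-space central limit theorem.
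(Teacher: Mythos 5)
Your proof is correct and follows the same two-step decomposition as the paper --- first trade $Z_n$ for the centred iid sum $Z_n^\ast$, then trade the random weight $\exp(-ay/\overline{X}_n)$ (and the factor $\overline{X}_n^{-3}$) for $\exp(-ay)$, and finish with the Hilbert-space CLT, the continuous mapping theorem and Slutsky --- but you implement both error estimates differently. For the first replacement the paper factors $\|Z_n-Z_n^\ast\|^2_{\mathbb{H}_a}$ as $(\overline{X}_n-1)^2$ times the squared $\mathbb{H}_a$-norm of the empirical survival process and invokes tightness of the latter (again via the Hilbert-space CLT), whereas you use your exact pointwise identity together with $\sqrt{n}(\overline{X}_n-1)=O_{\mathbb{P}}(1)$ and Glivenko--Cantelli; both work, though your sup-norm bound needs the weight $\exp(-at)$ to be integrable over $[0,\infty)$, so for $a=0$ you should additionally use $\int_0^\infty|\mathrm{e}^{-y}-\widehat{F}_n(y)|\,\mathrm{d}y\le 1+\overline{X}_n$ before passing to the $L^2$ norm. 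For the weight replacement the paper's device is a first-order Taylor expansion of $\exp(-ay/\overline{X}_n)$ that bounds the error by $a^{-1}\overline{X}_n^{-3}\,T_{n,0}\,|\overline{X}_n-1|$, so that the tightness of the \emph{unweighted} statistic $T_{n,0}$ (already established in the case $a=0$) absorbs the entire infinite integration range in one stroke; your truncation at $L$ plus a uniform-in-$n$ pointwise second-moment bound reaches the same conclusion more laboriously but also more transparently, and the non-uniformity of the weight ratio for large $y$ that you flag is precisely what the paper's $T_{n,0}$ bound is designed to circumvent. Two minor quantitative points: the cross term $\sqrt{n}(\overline{X}_n-1)\bigl(\mathrm{e}^{-y}-\widehat{F}_n(y)\bigr)$ contributes polynomial-in-$y$ corrections (e.g.\ via $\E[(X-1)^2\mathbf{1}\{X>y\}]=(1+y^2)\mathrm{e}^{-y}$), so the clean bound $\E[Z_n^2(y)]\le C\mathrm{e}^{-y}$ should be relaxed to $C(1+y^2)\mathrm{e}^{-y}$, or $C\mathrm{e}^{-cy}$ for some $c>0$, which is all your tail estimate requires; and the weight quotient $\exp\bigl(ay(1-\overline{X}_n^{-1})\bigr)$ actually explodes as $y\to\infty$ when $\overline{X}_n>1$ rather than $\overline{X}_n<1$, though of course both signs must be handled and your event $\{|\overline{X}_n-1|\le\varepsilon\}$ does so.
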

\begin{proof}
We first consider the case $a=0$. Note that 
\begin{eqnarray*}
    \|Z_n-Z_n^*\|_{\mathbb{H}_0}^2&=&\left(\frac{1}{n}\sum_{j=1}^n(X_j-1)\right)^2\int_0^\infty\left|\frac{1}{\sqrt{n}}\sum_{k=1}^n\left(\mathbf{1}\{X_k>y\}-\mathbb{P}(X_1>y)\right)\right|^2\mbox{d}y.
\end{eqnarray*}
By the strong law of large numbers $n^{-1}\sum_{j=1}^n(X_j-1)$ converges to 0 a.s., and the central limit theorem in $\mathbb{H}_0$ implies that the second term is a tight sequence. Hence, $\|Z_n-Z_n^*\|_{\mathbb{H}_0}^2=o_{\mathbb{P}}(1)$, and $T_{n,0}\cd \|Z\|_{\mathbb{H}_0}^2$, as $n\rightarrow\infty$ follows from $Z_n^*\cd Z$ in $\mathbb{H}_0$, Slutski's lemma and the continuous mapping theorem.

For the case $a>0$ define
\begin{equation*}
\widetilde{T}_{n,a}=\overline{X}_n^{-3}\int_0^\infty Z_n^2(y)\exp(-ay)\,\mbox{d}y.
\end{equation*}
A first-order Taylor expansion yields
\begin{equation}\label{eq:Texp1}
    \exp(-ay/\overline{X}_n)=\exp(-ay)+\frac{ay}{\Delta_n^2}\exp\left(-\frac{ay}{\Delta_n}\right)\left(\overline{X}_n-1\right),
\end{equation}
where $\Delta_n\in(\min(\overline{X}_n,1),\max(\overline{X}_n,1))$.
From the Cauchy-Schwarz inequality and \eqref{eq:Texp1} we obtain
\begin{eqnarray*}
|T_{n,a}-\widetilde{T}_{n,a}|&=&\overline{X}_n^{-3}\int_0^\infty Z_{n}^2(y)|\exp(-ay/\overline{X}_n)-\exp(-ay)|\mbox{d}y\\
&\le&\overline{X}_n^{-3}\int_0^\infty Z_{n}^2(y)\mbox{d}y \int_0^\infty|\exp(-ay/\overline{X}_n)-\exp(-ay)|\mbox{d}y\\
&=&\overline{X}_n^{-3}T_{n,0}\int_0^\infty\frac{ay}{\Delta_n^2}\exp\left(-\frac{ay}{\Delta_n}\right)\left|\overline{X}_n-1\right|\mbox{d}y\\
&=&\frac1a \overline{X}_n^{-3}T_{n,0}\left|\overline{X}_n-1\right|.
\end{eqnarray*}
The strong law of large numbers implies $\overline{X}_n\to 1$ a.s., and since $T_{n,0}$ is a tight sequence (see case $a=0$), \linebreak $|T_{n,a}-\widetilde{T}_{n,a}|=o_{\mathbb{P}}(1)$ follows. By the central limit theorem in $\mathbb{H}_a$ there exists a centred Gaussian process $\widetilde{Z}$ with covariance kernel $K_0$ such that $Z_n^*\cd \widetilde{Z}$ as $n\to\infty$ in $\mathbb{H}_a$ and the same reasoning as for the case $a=0$ yields $\|Z_n-Z_n^*\|_{\mathbb{H}_a}^2=o_{\mathbb{P}}(1)$. Invoking Slutzki's lemma and the continuous mapping theorem, we obtain $\widetilde{T}_{n,a}\cd \|\widetilde{Z}\|_{\mathbb{H}_a}^2$ from which the claim follows.

\end{proof}

\begin{remark}\label{rem:cum}
 By direct evaluation of integrals the first two cumulants of the distribution of $\|Z\|_{\mathbb{H}_a}^2$ are 
\begin{equation*}
\kappa_{1,a}=\E \|Z\|_{\mathbb{H}_a}^2=\int_0^\infty K_0(t,t)\exp(-at)\,\mbox{d}t=\frac1{(a+1)(a+2)}
\end{equation*}
and
\begin{equation*}
\kappa_{2,a}=\mathbb{V}(\|Z\|_{\mathbb{H}_a}^2)=2\int_0^\infty\int_0^\infty K_0^2(s,t)\exp(-a(s+t))\,\mbox{d}s\mbox{d}t=\frac{2}{(a^2+3a+2)(2a+3)(a+2)},
\end{equation*}
where $\mathbb{V}(\cdot)$ denotes the variance. Following the methodology in \cite{H:1990,S:1976}, we calculate the third and fourth cumulants according to 
\begin{equation*}
\kappa_{j,a}=2^{j-1}(j-1)!\int_{0}^\infty K_j(t,t) \exp(-at)\,\mbox{d}t,
\end{equation*}
where $K_j(s,t)$, the $j^{\text{th}}$ iterate of $K_0(s,t)$, is given by
\begin{eqnarray*}
K_{j}(s,t)&=&\int_0^\infty K_{j-1}(s,u)K_0(u,t)\exp(-au)\,\mbox{d}u,\quad j\ge2,\\
K_1(s,t)&=&K_0(s,t).
\end{eqnarray*}
Direct calculation shows that 
\begin{equation*}
    \kappa_{3,a}=\frac{16}{(1+a)(2a+3)(a+2)^3(3a+4)}
\end{equation*}
and 
\begin{equation*}
\kappa_{4,a}=\frac{48(11a+16)}{(3a+4)(2a+3)^2(a+2)^4(1+a)(4a+5)}.
\end{equation*}
These formulas are very useful in order to fit a Pearson system of distributions to approximate the critical values of the test statistic $T_{n,a}$. The approximated quantiles of the limiting distribution are found in Table \ref{tab:cv} and source code is provided in Appendix \ref{app:SC}. A comparison with Table 1 in \cite{BH:2008} shows that the approximation of the critical values is a reasonable fit to the empirical critical values for $a\ge0$.
\end{remark}
\begin{table}[t]
\centering
\begin{tabular}{lrrrrrrrrrr}
$q/a$ & -0.99 & -0.9 & -0.5 & 0 & 0.5 & 1 & 1.5 & 2 & 5 & 10 \\ 
  \hline
0.9 & 117.263 & 13.919 & 2.522 & 1.009 & 0.553 & 0.351 & 0.243 & 0.178 & 0.052 & 0.017 \\ 
  0.95 & 123.673 & 16.162 & 3.189 & 1.309 & 0.725 & 0.463 & 0.322 & 0.237 & 0.069 & 0.022 \\ 
  0.99 & 136.942 & 21.396 & 4.813 & 2.045 & 1.149 & 0.739 & 0.516 & 0.381 & 0.112 & 0.036 
\end{tabular}
\caption{Approximated $q$ - quantiles of the limiting distribution in Theorem \ref{thm:AsyDist} by a Pearson system of distributions using the formulas of the cumulants in Remark \ref{rem:cum}.}\label{tab:cv}
\end{table}
From the theory of Gaussian processes it is well known (see \cite{SW:1986}, p. 206) that for each $a>0$ an orthogonal decomposition of the process $Z$ yields $\|Z\|^2_{\mathbb{H}_a}=\sum_{j=1}^\infty \lambda_j(a) N_j^2,$ where $N_1,N_2,\ldots$ are iid. standard normal, and  $\lambda_1(a),\lambda_2(a),\ldots$ is the decreasing sequence of positive eigenvalues of the integral operator $\mathcal{K}_a:\mathbb{H}_a\rightarrow\mathbb{H}_a,\, f\mapsto\mathcal{K}_af(\cdot)=\int_0^\infty K_0(\cdot,t)f(t)\exp(-at)\mbox{d}t.$
To calculate the eigenvalues $\lambda_j(a)$, $j=1,2,\ldots$, of $\mathcal{K}_a$, one has to solve the homogeneous Fredholm integral equation of the second kind
\begin{equation*}\label{eq:inteq}
\int_0^\infty K_0(x,t)f(t)\exp(-at)\mbox{d}t=\lambda f(x),\quad x>0,
\end{equation*}
see, for example, \cite{KS:1947}. Usually these problems are very hard to solve explicitly and numerical or Monte Carlo simulation techniques are used to obtain approximations of $\lambda_j(a)$, see Section 5 in \cite{EH:2021} for a stochastic approximation method or the method presented in \cite{BMNO:2020}.

In the following we give explicit formulas for general tuning parameters $a$. Firstly, note that $K_0$ admits  the representation
\begin{eqnarray}
K_0(s,t)&=& \min(\exp(-s),\exp(-t))-\exp(-s)\exp(-t),\quad s,t>0.\label{eq:altKern1}
\end{eqnarray}

Let $a=1$. Using the alternative representation in \eqref{eq:altKern1} together with $x=\exp(-s)$ and a substitution ($y=\exp(-t)$), we have
\begin{equation*}
\int_0^\infty K_0(x,t)f(t)\exp(-t)\mbox{d}t=\int_0^1(\min(x,y)-xy)f(-\log(y))\mbox{d}y.
\end{equation*}
Since $\mathbb{K}(x,y)=\min(x,y)-xy$, $x,y\in[0,1]$, is the covariance kernel of the Brownian bridge $B(t)$, say,  the eigenvalue problem is solved in this case, see e.g. \cite{AD:1952}. We conclude that the eigenvalues and corresponding eigenfunctions are
\begin{equation*}
    \lambda_k(1)=\frac{1}{(k\pi)^2} \quad\mbox{and}\quad f_{k,1}(t)=\sin\left(k\pi\exp(-t)\right),\quad t>0.
\end{equation*}

By analogy the general case $a>0$ leads to 
\begin{equation*}
\int_0^\infty K_0(x,t)f(t)\exp(-at)\mbox{d}t=\int_0^1(\min(x,y)-xy)f(-\log(y))y^{a-1}\mbox{d}y,
\end{equation*}
which is connected to the eigenvalue problem of the weighted Brownian bridge $t^{a-1}B(t)$. This problem is solved in Theorem 1.4 in \cite{DM:2003}, which states the eigenvalues and eigenfunctions explicitly. Let $\nu=(a+1)^{-1}$ (note that there is a typographical error in \cite{DM:2003}) and denote by $J_\nu$ the Bessel functions of the first kind and by $0<z_{\nu,1}<z_{\nu,2}<\ldots$ the ascending sequence of zeros of $J_{\nu}$, for details on the zeros of Bessel functions see \cite{W:1995}, Chapter XV. We have for $k=1,2,\ldots$
\begin{equation}\label{eq:eig}
    \lambda_k(a)=\left(\frac{2\nu}{z_{\nu,k}}\right)^2\quad \mbox{and}\quad f_{k,a}(t)=\frac{J_\nu\left(z_{\nu,k}\exp(-\frac{t}{2\nu})\right)}{\sqrt{\nu}J_{\nu-1}(z_{\nu,k})}\exp\left(-\left(\frac1{\nu}-1\right)\frac{t}2\right),\quad t>0.
\end{equation}
By using the identities in Remark \ref{rem:cum} (compare to Corollary 1.3 in \citet{DM:2003}) we see that $\sum_{k=1}^\infty\lambda^j_k(a)=\kappa_{j,a}/(2^{j-1}(j-1)!)$, $j=1,2,3,4$. The largest twenty eigenvalues are given in Table \ref{tab:eig}, and for the sake of completeness, we approximated the scaled cumulants by the sum of the respective powers of the first 100 eigenvalues. We see that for the mean, there is still some difference in the first few digits, which is explained by the low speed of convergence to 0 of the eigenvalues. Interestingly, the eigenvalues in Table \ref{tab:eig} in each row are strictly decreasing, which is explained by the fact that for fixed $k$ the function $z_{\nu,k}/\nu$ is increasing for $\nu\downarrow0$, see \cite{Oetal:2010}, p. 236.

\begin{remark}
The stated formulas of eigenvalues and eigenfunctions were also derived in \cite{BH:2000} for the case $a=1$ relating the statistics to the classical Cram\'{e}r-von Mises test and for the general case $a>-1$ by solving a related differential equation in \cite{BH:2008}. From these results we see that \eqref{eq:eig} also holds in these cases.
\end{remark}

\begin{table}[t]
\centering
\begin{tabular}{lrrrrrr}
 $k/a$& 0 & 1 & 2 & 3 & 4 & 5 \\ 
  \hline
1 & 0.2724430 & 0.10132118 & 0.05275301 & 0.03232757 & 0.02183334 & 0.015732912 \\ 
  2 & 0.0812703 & 0.02533030 & 0.01221201 & 0.00716691 & 0.00470790 & 0.003327524 \\ 
  3 & 0.0386475 & 0.01125791 & 0.00528483 & 0.00305755 & 0.00199068 & 0.001398474 \\ 
  4 & 0.0225326 & 0.00633257 & 0.00293284 & 0.00168481 & 0.00109214 & 0.000764966 \\ 
  5 & 0.0147448 & 0.00405285 & 0.00186176 & 0.00106499 & 0.00068857 & 0.000481445 \\ 
  6 & 0.0103955 & 0.00281448 & 0.00128585 & 0.00073348 & 0.00047341 & 0.000330626 \\ 
  7 & 0.0077217 & 0.00206778 & 0.00094101 & 0.00053570 & 0.00034534 & 0.000240981 \\ 
  8 & 0.0059612 & 0.00158314 & 0.00071835 & 0.00040833 & 0.00026299 & 0.000183403 \\ 
  9 & 0.0047409 & 0.00125088 & 0.00056629 & 0.00032151 & 0.00020693 & 0.000144239 \\ 
  10 & 0.0038603 & 0.00101321 & 0.00045785 & 0.00025971 & 0.00016705 & 0.000116401 \\ 
  11 & 0.0032042 & 0.00083737 & 0.00037782 & 0.00021415 & 0.00013768 & 0.000095907 \\ 
  12 & 0.0027021 & 0.00070362 & 0.00031708 & 0.00017960 & 0.00011543 & 0.000080385 \\ 
  13 & 0.0023095 & 0.00059953 & 0.00026989 & 0.00015279 & 0.00009817 & 0.000068347 \\ 
  14 & 0.0019966 & 0.00051694 & 0.00023250 & 0.00013156 & 0.00008450 & 0.000058824 \\ 
  15 & 0.0017433 & 0.00045032 & 0.00020237 & 0.00011447 & 0.00007351 & 0.000051161 \\ 
  16 & 0.0015352 & 0.00039579 & 0.00017774 & 0.00010050 & 0.00006452 & 0.000044903 \\ 
  17 & 0.0013624 & 0.00035059 & 0.00015735 & 0.00008895 & 0.00005709 & 0.000039727 \\ 
  18 & 0.0012171 & 0.00031272 & 0.00014028 & 0.00007927 & 0.00005088 & 0.000035397 \\ 
  19 & 0.0010939 & 0.00028067 & 0.00012584 & 0.00007110 & 0.00004562 & 0.000031738 \\ 
  20 & 0.0009885 & 0.00025330 & 0.00011352 & 0.00006412 & 0.00004114 & 0.000028618 \\ \hline
  $\sum_{j=1}^{100}\lambda_j(a)$ & 0.4959773 & 0.16565850 & 0.08288489 & 0.04974765 & 0.03317179 & 0.023697320 \\ 
  $\kappa_{1,a}$ & 0.5000000 & 0.16666667 & 0.08333333 & 0.05000000 & 0.03333333 & 0.023809524 \\ 
  $\sum_{j=1}^{100}\lambda_j^2(a)$ & 0.0833333 & 0.01111111 & 0.00297619 & 0.00111111 & 0.00050505 & 0.000261643 \\ 
  $\kappa_{2,a}/2$ & 0.0833333 & 0.01111111 & 0.00297619 & 0.00111111 & 0.00050505 & 0.000261643 \\ 
  $\sum_{j=1}^{100}\lambda_j^3(a)$ & 0.0208333 & 0.00105820 & 0.00014881 & 0.00003419 & 0.00001052 & 0.000003934 \\ 
  $\kappa_{3,a}/8$ & 0.0208333 & 0.00105820 & 0.00014881 & 0.00003419 & 0.00001052 & 0.000003934 \\ 
  $\sum_{j=1}^{100}\lambda_j^4(a)$ & 0.0055556 & 0.00010582 & 0.00000777 & 0.00000109 & 0.00000023 & 0.000000061 \\ 
  $\kappa_{4,a}/48$ & 0.0055556 & 0.00010582 & 0.00000777 & 0.00000109 & 0.00000023 & 0.000000061 \\ 
\end{tabular}
\caption{First twenty eigenvalues $\lambda_k(a)$ for different tuning parameters $a$ and sums of powers over the first 100 eigenvalues and the corresponding cumulants using the formulas in Remark \ref{rem:cum}.}\label{tab:eig}
\end{table}

\section{Limiting distribution under fixed alternatives and consistency}\label{sec:Limit_dist_fixed_alt}
In this section we assume that $X_1,X_2,\ldots$ is a sequence of iid. random variables with cumulative distribution function $F$, $\mathbb{E}[X_1]=1$ and $\mathbb{E}[|X_1|^3]<\infty$. The moment condition is motivated by the scale invariance of $T_{n,a}$ and hence there is no loss of generality compared to fixed alternatives with existing third moment.

\begin{theorem}\label{thm:cons}
We have 
\begin{equation*}
\frac{T_{n,a}}{n}\cp \int_0^\infty z^2(y)\exp(-ay)\,{\rm d}y=\Delta_a,\quad a\ge0,\quad \mbox{as}\; n\to\infty,
\end{equation*}
where $z(y)=\mathbb{E}\left[(X_1-y-1)\mathbf{1}\{X_1>y\}\right]$, $y>0$.
\end{theorem}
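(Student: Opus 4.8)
The plan is to mirror the structure of the proof of Theorem~\ref{thm:AsyDist}, but now invoking a law of large numbers in $\mathbb{H}_a$ in place of the central limit theorem. Using the change of variable already employed there, write
\[\frac{T_{n,a}}{n}=\overline{X}_n^{-3}\int_0^\infty \widetilde{W}_n^2(y)\exp(-ay/\overline{X}_n)\,\mbox{d}y,\qquad \widetilde{W}_n(y)=\frac1n\sum_{j=1}^n(X_j-y-\overline{X}_n)\mathbf{1}\{X_j>y\}.\]
Since $\Delta_a=\|z\|_{\mathbb{H}_a}^2$, the heart of the matter is to show that $\widetilde{W}_n\cp z$ in $\mathbb{H}_a$; once this is established the scalar factor $\overline{X}_n^{-3}\to1$ a.s. and the perturbation of the exponent are dealt with separately by Slutsky's lemma.

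First I would treat the \emph{unscaled} process $W_n(y)=\frac1n\sum_{j=1}^n(X_j-y-1)\mathbf{1}\{X_j>y\}$, whose summands are iid with mean $z(y)$ for each fixed $y$, so that $\mathbb{E}[(W_n(y)-z(y))^2]=n^{-1}\mathbb{V}((X_1-y-1)\mathbf{1}\{X_1>y\})$. Tonelli's theorem then gives
\[\mathbb{E}\|W_n-z\|_{\mathbb{H}_a}^2\le \frac1n\int_0^\infty\mathbb{E}\big[(X_1-y-1)^2\mathbf{1}\{X_1>y\}\big]\exp(-ay)\,\mbox{d}y.\]
On $\{X_1>y\}$ one has $|X_1-y-1|\le X_1+1$, so the inner expectation is bounded by $\mathbb{E}[(X_1+1)^2\mathbf{1}\{X_1>y\}]$, and a second application of Tonelli reduces the double integral to $\mathbb{E}[(X_1+1)^2\int_0^{X_1}\exp(-ay)\,\mbox{d}y]$. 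For $a>0$ this is at most $a^{-1}\mathbb{E}[(X_1+1)^2]$, while for $a=0$ it equals $\mathbb{E}[(X_1+1)^2X_1]$; both are finite precisely because $\mathbb{E}[|X_1|^3]<\infty$. Hence $\mathbb{E}\|W_n-z\|_{\mathbb{H}_a}^2=O(n^{-1})$, so $\|W_n-z\|_{\mathbb{H}_a}\cp0$, and the same estimate shows $z\in\mathbb{H}_a$, i.e. $\Delta_a<\infty$. I then pass from $W_n$ to $\widetilde{W}_n$ via $\widetilde{W}_n=W_n-(\overline{X}_n-1)\widehat{S}_n$ with $\widehat{S}_n(y)=\frac1n\sum_{j=1}^n\mathbf{1}\{X_j>y\}$. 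The norm $\|\widehat{S}_n\|_{\mathbb{H}_a}$ is bounded in probability—trivially by $a^{-1/2}$ for $a>0$, and for $a=0$ by $\widehat{S}_n^2\le\widehat{S}_n$, which yields $\|\widehat{S}_n\|_{\mathbb{H}_0}^2\le\int_0^\infty\widehat{S}_n(y)\,\mbox{d}y=\overline{X}_n\to1$ a.s.—while $\overline{X}_n-1\to0$ a.s. Thus $\widetilde{W}_n\cp z$ in $\mathbb{H}_a$, and the continuous mapping theorem gives $\|\widetilde{W}_n\|_{\mathbb{H}_a}^2\cp\Delta_a$. Together with $\overline{X}_n^{-3}\to1$ this already settles the case $a=0$, where the exponent causes no perturbation.

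For $a>0$ I would finally split
\[\frac{T_{n,a}}{n}=\overline{X}_n^{-3}\|\widetilde{W}_n\|_{\mathbb{H}_a}^2+\overline{X}_n^{-3}\int_0^\infty\widetilde{W}_n^2(y)\big(\exp(-ay/\overline{X}_n)-\exp(-ay)\big)\,\mbox{d}y=:B_n+R_n,\]
where $B_n\cp\Delta_a$ by the preceding paragraph. For the remainder I use the crude bound $|R_n|\le\overline{X}_n^{-3}\,\big(\sup_{y\ge0}|\exp(-ay/\overline{X}_n)-\exp(-ay)|\big)\int_0^\infty\widetilde{W}_n^2(y)\,\mbox{d}y$. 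Here $\int_0^\infty\widetilde{W}_n^2(y)\,\mbox{d}y=\|\widetilde{W}_n\|_{\mathbb{H}_0}^2\cp\Delta_0$ is bounded in probability by the $a=0$ analysis, and the map $c\mapsto\sup_{y\ge0}|\exp(-ay/c)-\exp(-ay)|$ is continuous with value $0$ at $c=1$, so it tends to $0$ a.s. as $\overline{X}_n\to1$; hence $R_n\cp0$ and $T_{n,a}/n\cp\Delta_a$. I expect the main obstacle to be the integrability estimate of the second paragraph: everything else is routine, but that step is where the weight order $a\ge0$ and the third-moment hypothesis interact, and it must be arranged so that the $a=0$ case—which loses the exponential damping—still closes using $\mathbb{E}[|X_1|^3]<\infty$.
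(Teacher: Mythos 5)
Your proof is correct, and while it follows the same skeleton as the paper's argument (reduce $T_{n,a}/n$ to the squared $\mathbb{H}_a$-norm of an empirical process, then apply the continuous mapping theorem and Slutsky), the implementation differs in two substantive ways. The paper works with the auxiliary process $n^{-1/2}Z_n^*$, whose summands are iid with mean $z$, and simply cites the strong law of large numbers in Hilbert spaces together with ``the same arguments as in the proof of Theorem \ref{thm:AsyDist}'' for the nuisance terms; you instead avoid $Z_n^*$ altogether, prove the Hilbert-space law of large numbers by hand via the second-moment bound $\E\|W_n-z\|_{\mathbb{H}_a}^2=O(n^{-1})$, and pass to $\widetilde{W}_n$ through the exact identity $\widetilde{W}_n=W_n-(\overline{X}_n-1)\widehat{S}_n$. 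This buys you something the paper leaves implicit: your Tonelli computation shows precisely where $\E[|X_1|^3]<\infty$ enters (only for $a=0$, where the exponential damping is lost, whereas $a>0$ needs only two moments), and it also verifies $z\in\mathbb{H}_a$, i.e.\ $\Delta_a<\infty$. You obtain convergence in probability rather than the almost sure convergence the SLLN delivers, but the theorem only asserts $\cp$, so nothing is lost. Your handling of the random weight $\exp(-ay/\overline{X}_n)$ via the uniform bound $\sup_{y\ge0}|\exp(-ay/c)-\exp(-ay)|\to0$ as $c\to1$, paired with tightness of $\|\widetilde{W}_n\|_{\mathbb{H}_0}^2$, is also cleaner than the paper's Taylor-expansion argument in Theorem \ref{thm:AsyDist} (whose step bounding $\int fg$ by $\int f\int g$ is not a valid application of Cauchy--Schwarz as written); your version is airtight. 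In short: same route, but self-contained and more transparent about the moment hypotheses, at the cost of length.
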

\begin{proof}
Note that by the same arguments as in the proof of Theorem \ref{thm:AsyDist} we have $n^{-1}\|Z_n-Z_n^*\|_{\mathbb{H}_a}^2\to0$ a.s. Since by the strong law of large numbers in Hilbert spaces $n^{-1/2}Z_n^*\to z$ a.s. in $\mathbb{H}_a$ the claim follows by the continuous mapping theorem and Slutski's lemma.
\end{proof}

Under the assumption $X_1\sim\mbox{Exp}(1)$, the characterisation in \eqref{eq:Char} gives $z\equiv0$ and hence $\Delta_a=0$. Since $\Delta_a$ equals 0 if and only if $X_1$ follows an exponential distribution, we conclude that the tests $T_{n,a}$ are consistent against each alternative distribution with existing first moment.

\begin{example}\label{ex:alt}
In this example we give explicit formulas for $\Delta_a$ for different $\Gamma(\beta,\beta)$ distributions. Note that for an suitable alternative we need $\mathbb{E}[X]=1$ to be satisfied. Direct evaluations show that
\begin{enumerate}
\item if $X\sim\Gamma(2,2)$, then  $\Delta_a=2/(a+4)^3$,
\item if $X\sim\Gamma(3,3)$, then $\Delta_a=2(a^2+30a+252)/(a+6)^5$, and
\item if $X\sim\Gamma(4,4)$, then $\Delta_a=2\left({a}^{4}+56\,{a}^{3}+1344\,{a}^{2}+16640\,a+94720\right)/ \left( a
+8 \right) ^{7}$.
\end{enumerate}
\end{example}
\begin{remark}
Note that higher values of $\Delta_a$ in dependence of the tuning parameter $a$ do not imply greater power of the test against this alternative. To visualise this behaviour, we performed a Monte Carlo (MC) simulation study  with different $\Gamma(\beta,\beta)$ distributions and plot the empirical rejection rate of the tests in Figure \ref{fig:PowG}. Note that under $\Gamma(1,1)=\mbox{Exp}(1)$, hence under the null hypothesis, the type I error is well calibrated below the significance level of 0.05 (here visualised by a solid line).
\end{remark}
\begin{figure}[t]
    \centering
    \includegraphics[scale=0.4]{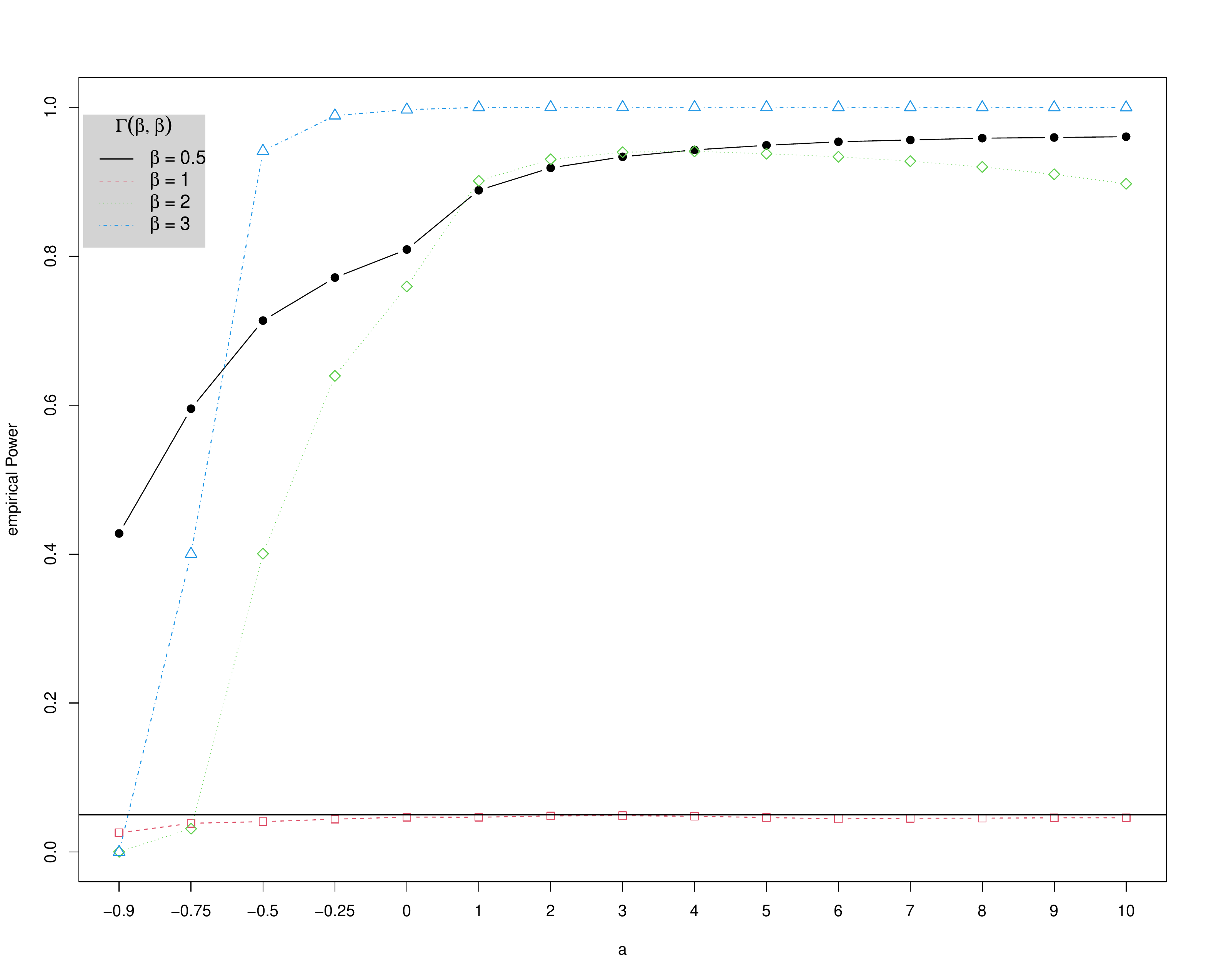}
    \caption{Empirical power of the test $T_{n,a}$ for different tuning parameters $a$. Each dot represents the empirical rejection rate of the test at a significance level of 0.05 under the stated $\Gamma(\beta,\beta)$ distribution (MC simulation with 10000 replications) .}
    \label{fig:PowG}
\end{figure}
In the following we derive the limiting distribution of the test statistic under fixed alternatives under the stated assumptions in the beginning of the section.
Define
\begin{equation*}
W_n(y)=Z_n(y)-\sqrt{n}z(y),\quad W_n^*(y)=Z_n^*(y)-\sqrt{n}z(y),\quad y>0.
\end{equation*}
It is easy to see that $\|W_n-W_n^*\|^2_{\mathbb{H}_a}=o_\mathbb{P}(1)$. 

\begin{lemma}\label{lem:conW}
Under the standing assumptions there exists a centred Gaussian process $W$ in $\mathbb{H}_a$ with covariance kernel
\begin{eqnarray*}
    K(s,t)&=&\Psi_2(s\lor t)+st\mathbb{P}(X>s\lor t) - (s+t)\Psi_1(s\lor t)\\
    &&-\mathbb{P}(X>s)\left(\Psi_2(t)-t\Psi_1(t)\right)-\mathbb{P}(X>t)\left(\Psi_2(s)-s\Psi_1(s)\right)\\
    &&+\mathbb{V}(X)\mathbb{P}(X>s)\mathbb{P}(X>t)-z(s)z(t),\quad s,t>0,
\end{eqnarray*}
where $s\lor t=\max(s,t)$, and $\Psi_\ell(s)=\mathbb{E}[(X-1)^\ell\mathbf{1}\{X>s\}]$, $s>0$, for $\ell=1,2$. In $\mathbb{H}_a$ we have
\begin{equation*}
W_n\cd W\quad \mbox{as}\,n\rightarrow\infty.
\end{equation*}
\end{lemma}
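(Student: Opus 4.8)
The plan is to mirror the proof of Theorem \ref{thm:AsyDist}: reduce the claim to a central limit theorem for a normalised sum of iid centred elements of $\mathbb{H}_a$ and then read off the covariance operator. Since $\|W_n - W_n^*\|^2_{\mathbb{H}_a} = o_{\mathbb{P}}(1)$ has already been noted, Slutski's lemma reduces the claim to proving $W_n^* \cd W$ in $\mathbb{H}_a$. The decisive observation is that $W_n^*$ is \emph{exactly} a centred empirical sum. Writing $\xi_j(y) = (X_j - y - 1)\mathbf{1}\{X_j > y\} - (X_j - 1)\mathbb{P}(X > y)$, the moment assumption $\mathbb{E}[X_1] = 1$ gives $\mathbb{E}[\xi_j(y)] = z(y)$, so that
\begin{equation*}
W_n^*(y) = \frac{1}{\sqrt{n}}\sum_{j=1}^n \big(\xi_j(y) - \mathbb{E}[\xi_j(y)]\big), \quad y > 0,
\end{equation*}
is the normalised sum of the iid centred random elements $V_j = \xi_j - z$ of $\mathbb{H}_a$.

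The main analytic point is to verify the single hypothesis of the central limit theorem in the separable Hilbert space $\mathbb{H}_a$, namely $\mathbb{E}\|V_1\|^2_{\mathbb{H}_a} < \infty$, and this is where the assumption $\mathbb{E}[|X_1|^3] < \infty$ enters. It suffices to bound $\mathbb{E}\|\xi_1\|^2_{\mathbb{H}_a}$, which by $(u-v)^2 \le 2u^2 + 2v^2$ splits into the contributions of the two summands of $\xi_1$. Using $|X - y - 1| \le X + 1$ on $\{X > y\}$ and $\exp(-ay)\le 1$, the first summand is controlled by $\mathbb{E}\int_0^{X}(X+1)^2\,\mbox{d}y = \mathbb{E}[(X+1)^2 X]$, which is finite precisely by the third moment assumption, while the second summand contributes $\mathbb{E}[(X-1)^2]\int_0^\infty \mathbb{P}(X>y)^2\exp(-ay)\,\mbox{d}y$, finite by $\int_0^\infty \mathbb{P}(X>y)\,\mbox{d}y = \mathbb{E}[X] < \infty$ together with the second moment. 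I expect this integrability verification, rather than any deep probabilistic argument, to be the only genuine obstacle; once it is in place the Hilbert-space central limit theorem furnishes a centred Gaussian limit $W$ whose covariance kernel is $K(s,t) = \mathrm{Cov}(\xi_1(s),\xi_1(t)) = \mathbb{E}[\xi_1(s)\xi_1(t)] - z(s)z(t)$.

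It then remains to check that this covariance equals the stated $K$. This is a direct but bookkeeping-heavy computation that I would organise by expanding $\xi_1(s)\xi_1(t)$ into four products and treating $s \le t$ (so $s \lor t = t$) without loss of generality. Setting $U = X - 1$ and using $\mathbf{1}\{X > s\}\mathbf{1}\{X > t\} = \mathbf{1}\{X > t\}$, the product of the two indicator terms yields $\mathbb{E}[(U - s)(U - t)\mathbf{1}\{X > t\}] = \Psi_2(t) - (s+t)\Psi_1(t) + st\,\mathbb{P}(X > t)$, which supplies the first line of $K$. The two mixed products give $\mathbb{P}(X > t)(\Psi_2(s) - s\Psi_1(s))$ and $\mathbb{P}(X > s)(\Psi_2(t) - t\Psi_1(t))$, the product of the two linear terms gives $\mathbb{V}(X)\,\mathbb{P}(X > s)\mathbb{P}(X > t)$ since $\mathbb{E}[(X-1)^2] = \mathbb{V}(X)$, and finally subtracting $z(s)z(t)$ produces the last term; collecting these reproduces $K(s,t)$ exactly. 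The interchange of expectation and the $\mathbb{H}_a$-integral implicit in identifying the covariance operator with the kernel $K$ is justified by the same square-integrability established above together with Fubini's theorem.
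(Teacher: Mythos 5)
Your proposal follows the paper's proof exactly: reduce to $W_n^*$ via $\|W_n-W_n^*\|^2_{\mathbb{H}_a}=o_{\mathbb{P}}(1)$, recognise $W_n^*$ as a normalised sum of iid centred elements of $\mathbb{H}_a$, invoke the Hilbert-space CLT, and identify $K(s,t)=\mathbb{E}[W_1^*(s)W_1^*(t)]$. You additionally spell out the square-integrability check (where $\mathbb{E}[|X_1|^3]<\infty$ enters) and the kernel computation that the paper dismisses as tedious but straightforward; both are carried out correctly.
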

\begin{proof}
Since $\|W_n-W_n^*\|^2_{\mathbb{H}_a}=o_\mathbb{P}(1)$ the limit process is determined by the limit behaviour of $W_n^*$. We have
\begin{equation*}
    W_n^*(y)=\frac1{\sqrt{n}}\sum_{j=1}^n\big[(X_j-y-1)\mathbf{1}\{X_j>y\}-(X_j-1)\mathbb{P}(X_j>y)-z(y)\big],\quad y\ge0,
\end{equation*}
and $\mathbb{E}[W_n^*(y)]=0$, $y>0$, since $\mathbb{E}[X_1]=1$. Hence $W_n^*$ is a sum of iid. elements in $\mathbb{H}_a$ and thus converges as $n\to\infty$ to a centred Gaussian process $W$ with covariance kernel $K(s,t)=\mathbb{E}[W_1^*(s)W_1^*(t)]$, $s,t\ge0$, in $\mathbb{H}_a$ by the central limit theorem in Hilbert spaces. The formula for $K$ is obtained by tedious but straightforward calculations.
\end{proof}

By the results of Lemma \ref{lem:conW} and Theorem 1 in \cite{BEH:2017}, we thus have
\begin{equation}\label{eq:altasyvert}
    \sqrt{n}\left(\frac{T_{n,a}}{n}-\Delta_a\right)\cd \mbox{N}(0,\sigma_a^2),
\end{equation}
where 
\begin{equation*}
\sigma_a^2=4\int_0^\infty\int_0^\infty K(x,y)z(x)z(y)\exp(-a(x+y))\,\mbox{d}x\mbox{d}y.
\end{equation*}
For some families of distributions, $\Delta_a$ and $\sigma_a^2$ can be calculated explicitly for fixed tuning parameters see Example \ref{ex:alt}.

In general, however, we have to find a consistent estimator $\widehat{\sigma}_{n,a}^2$ of $\sigma_a^2$. In this spirit, we replace the probabilities by relative frequencies, the variance by the empirical variance $s_y^2=\frac1n\sum_{j=1}^n(Y_j-1)^2$, and the expectation by empirical counterparts
\begin{equation*}
\Psi_{n,\ell}(s)=\frac1n\sum_{j=1}^n(Y_j-1)^\ell\mathbf{1}\{Y_j>s\},\quad \ell=1,2,\quad \mbox{and}\quad z_{n}(s)=\frac1n\sum_{j=1}^n(Y_j-s-1)\mathbf{1}\{Y_j>s\},\quad s>0,
\end{equation*}
based on the scaled random variables $Y_1,\ldots,Y_n$. Denoting by $K_n$ the resulting estimator of $K$ obtained from plugging-in the empirical counterparts into the formula of $K$, the estimator  $\widehat{\sigma}_{n,a}^2$ of $\sigma_a^2$ is
\begin{equation*}
\widehat{\sigma}_{n,a}^2=4\int_0^\infty\int_0^\infty K_n(x,y)z_n(x)z_n(y)\exp(-a(x+y))\,\mbox{d}x\mbox{d}y.
\end{equation*}
Writing $\widehat{\tau}_n^2=n^{-1}\sum_{j=1}^n(Y_j-1)^2$, and
\begin{eqnarray*}
S_{1n}&=&\int_0^\infty\int_0^\infty \Psi_{n,2}(s\lor t)z_n(s)z_n(t)\exp(-a(s+t))\,\mbox{d}t\mbox{d}s,\\
S_{2n}&=&\frac1n\sum_{j=1}^n\int_0^\infty\int_0^\infty st\mathbf{1}\{Y_j>s\lor t\}z_n(s)z_n(t)\exp(-a(s+t))\,\mbox{d}t\mbox{d}s\\
S_{3n}&=&\int_0^\infty\int_0^\infty (s+t)\Psi_{n,1}(s\lor t)z_n(s)z_n(t)\exp(-a(s+t))\,\mbox{d}t\mbox{d}s,\\
S_{4n}&=&\int_0^\infty\left(\Psi_{n,2}(t)-t\Psi_{n,1}(t)\right)z_n(t)\exp(-at)\mbox{d}t\\
S_{5n}&=&\frac1n\sum_{j=1}^n\int_0^\infty \mathbf{1}\{Y_j>t\}z_n(t)\exp(-at)\,\mbox{d}t,\\
S_{6n}&=&\int_0^\infty z_n^2(t)\exp(-at)\,\mbox{d}t,
\end{eqnarray*}
we have 
\begin{equation*}
\widehat{\sigma}_{n,a}^2=4\left[S_{1n}+S_{2n}-S_{3n}-2S_{4n}S_{5n}+\widehat{\tau}_n^2S_{5n}^2-S_{6n}^2\right].
\end{equation*}
Using the functions $\upsilon_{\ell,a}(x,y)$, $x,y>0$, $\ell=1,2,3$, from Appendix \ref{app:form}, we have
\begin{eqnarray*}
S_{1n}&=&\frac1{n^3}\sum_{j,k,l=1}^n(Y_j-1)^2\upsilon_{1,a}(Y_j,Y_k)\upsilon_{1,a}(Y_j,Y_l),\\
S_{2n}&=&\frac1{n^3}\sum_{j,k,l=1}^n\upsilon_{2,a}(Y_j,Y_k)\upsilon_{2,a}(Y_j,Y_l),\\
S_{3n}&=&\frac2{n^3}\sum_{j,k,l=1}^n(Y_j-1)\upsilon_{2,a}(Y_j,Y_k)\upsilon_{1,a}(Y_j,Y_l),\\
S_{4n}&=&\frac1{n^2}\sum_{j,k=1}^n\left[(Y_j-1)^2\upsilon_{1,a}(Y_j,Y_k)-(Y_j-1)\upsilon_{2,a}(Y_j,Y_k)\right],\\
S_{5n}&=&\frac1{n^2}\sum_{j,k=1}^n\upsilon_{1,a}(Y_j,Y_k),\quad\mbox{and}\quad S_{6n}=\frac1{n^2}\sum_{j,k=1}^n\upsilon_{3,a}(Y_j,Y_k).
\end{eqnarray*}
Since $\widehat{\sigma}^2_{n,a}$ is a consistent sequence of estimators of $\sigma^2_a$ for each fixed tuning parameter $a>0$, we have (in the spirit of Corollary 1 in \cite{BEH:2017}) under the stated assumptions at the beginning of this section
\begin{equation}\label{eq:asystnorm}
    \frac{\sqrt{n}}{\widehat{\sigma}_{n,a}}\left(\frac{T_{n,a}}{n}-\Delta_a\right)\cd \mbox{N}(0,1)\quad \mbox{as}\quad n\to\infty.
\end{equation}
This result has immediate consequences, see Section 3 of \cite{BEH:2017}, which are detailed in the following subsections.

\subsection{A confidence interval for $\Delta_a$}\label{subsec:conf}
For $\alpha\in(0,1)$ let $u_\alpha=\Phi^{-1}(1-\alpha/2)$ be the $(1-\alpha/2)$-quantile of the standard normal law. Putting
\begin{equation*}
    I_{n,a}=\left[\frac{T_{n,a}}{n}-\frac{u_\alpha\widehat{\sigma}_{n,a}}{\sqrt{n}},\frac{T_{n,a}}{n}+\frac{u_\alpha\widehat{\sigma}_{n,a}}{\sqrt{n}}\right],\quad a\ge 0,
\end{equation*}
it follows from \eqref{eq:asystnorm} that 
\begin{equation*}
    \lim_{n\to \infty}\mathbb{P}_F(I_{n,a}\ni \Delta_a)=1-\alpha.
\end{equation*}
Hence $I_{n,a}$ is an asymptotic confidence interval at confidence level $1-\alpha$ for $\Delta_a$. In the following we revisit the gamma distributions of Example \ref{ex:alt}. Specific values of $\Delta_a$, $\sigma_a^2$ and the corresponding estimators for $\Gamma(\beta,\beta)$ and $\beta\in\{2,3,4,5,10\}$ are found in Table \ref{tab:explval}. The discrepancy between estimators and true values are in line with the results of Table 5 in \cite{BEH:2017}, which indicates a rather slow convergence to the limiting distribution under alternatives in \eqref{eq:asystnorm}. In Table \ref{tab:coverage} we present the empirical coverage probabilities of $I_{n,a}$ for $\Delta_a$ in the same setting as in Table \ref{tab:explval}. Critical values have been obtained by the Pearson-system approximation as presented in Table \ref{tab:cv}. Interestingly the confidence interval seems to be conservative, since the estimated probability of coverage of the true value $\Delta_a$ is disproportionately high in most cases. This is in contrast to the findings in Table 6 in \cite{BEH:2017}, where lower coverage rates than indicated by the nominal level were reported.

\begin{table}[t]
\centering
\begin{tabular}{lrrrrrr}
    $a/\beta$  & 2 & 3 & 4 & 5 & 10 \\ \hline
    $\Delta_0$ & 0.0312  & 0.0648  &  0.0903 & 0.1100  & 0.1661 \\
    $T_{n,0}/n$ &  0.0252  & 0.0672  & 0.0944  & 0.1179  & 0.1778 \\
    $\sigma_0^2$ & 0.0178  & 0.0300 & 0.0357  & 0.0382  & 0.0362\\ 
    $\widehat{\sigma}_{n,0}^2$ & 0.0134  & 0.0313  & 0.0361  & 0.0377  & 0.0331\\ \hline
    $\Delta_1$ &  0.0160 & 0.0337  & 0.0472  &  0.0575 & 0.0863 \\
    $T_{n,1}/n$ &  0.0135 &  0.0349 & 0.0493  &  0.0619 &  0.0923\\
    $\sigma_1^2$ & 0.0039  &  0.0065 & 0.0076  & 0.0080  & 0.0070\\ 
    $\widehat{\sigma}_{n,1}^2$ & 0.0031  &  0.0069 &  0.0076 & 0.0080  & 0.0062 \\ \hline
    $\Delta_2$ &  0.0092 &  0.0193 & 0.0268  & 0.0324  & 0.0475 \\
    $T_{n,2}/n$ &  0.0081 &  0.0199 & 0.0280  &  0.0348 & 0.0506 \\
    $\sigma_2^2$ & 0.0012  & 0.0018  & 0.0020  & 0.0020  & 0.0015\\ 
    $\widehat{\sigma}_{n,2}^2$ & 0.0010  & 0.0019  & 0.0019  & 0.0020  & 0.0013 \\ \hline
    $\Delta_3$ & 0.0058  &  0.0119 & 0.0162  &  0.0194 & 0.0276 \\
    $T_{n,3}/n$ & 0.0053  & 0.0122  & 0.0170  &  0.0208 & 0.0292 \\
    $\sigma_3^2$ & 0.0004  &  0.0006 &  0.0006 & 0.0006  & 0.0003\\ 
    $\widehat{\sigma}_{n,3}^2$ & 0.0004  & 0.0006  & 0.0006  &  0.0006 & 0.0003 \\ \hline
    $\Delta_4$ & 0.0040  & 0.0078  & 0.0104  &  0.0123 & 0.0168 \\
    $T_{n,4}/n$ & 0.0036  &  0.0079 & 0.0109  &  0.0130 & 0.0177 \\
    $\sigma_4^2$ & 0.0002  &  0.0002 &  0.0002 &  0.0002 & 0.0001\\ 
    $\widehat{\sigma}_{n,4}^2$ & 0.0002  & 0.0002  & 0.0002  & 0.0002  & 0.0001
    \end{tabular}
\caption{Values of $\Delta_a$ and $\sigma_a^2$ for different $\Gamma(\beta,\beta)$ distributions and simulated observations of $T_{n,a}/n$ and $\widehat{\sigma}^2_{n,a}$ for different tuning parameters $a$ ($n=1000$).}\label{tab:explval}  
\end{table}

\begin{table}[b]
\centering
\begin{tabular}{llrrrrrrrrrrrrrrrrrr}
& $\beta$ &\multicolumn{3}{c}{2} & \multicolumn{3}{c}{3} & \multicolumn{3}{c}{4} & \multicolumn{3}{c}{5} & \multicolumn{3}{c}{10} \\[1mm]
$a$ & $n$ & 20 & 50 & 100 & 20 & 50 & 100 & 20 & 50 & 100 & 20 & 50 & 100 & 20 & 50 & 100 \\\hline 
0 & &0.95 & 0.95 & 0.94 & 0.96 & 0.96 & 0.97  &  0.96 & 0.98 & 0.98  &  0.97 & 0.98 & 0.99  &  0.98 & 0.99 & 0.99 \\
1 & & 0.90 & 0.92 & 0.93 & 0.92 & 0.95 & 0.95  &  0.93 & 0.96 & 0.96  &   0.94 & 0.96 & 0.97 &  0.96 & 0.98 & 0.99 \\
2 & & 0.87 & 0.90 & 0.91 & 0.89 & 0.93 & 0.94  & 0.90 & 0.94 & 0.95  &  0.91 & 0.94 & 0.96  & 0.93 & 0.96 & 0.97\\
3 & & 0.85 & 0.89 & 0.91 & 0.85 & 0.91 & 0.92 &  0.87 & 0.92 & 0.93  &   0.87 & 0.92 & 0.94  & 0.90 & 0.94 & 0.95  \\
4 &  & 0.82 & 0.88 & 0.90  &  0.83 & 0.90 & 0.92  &  0.84 & 0.90 & 0.92  &  0.85 & 0.90 & 0.93  &  0.87 & 0.92 & 0.94 
\end{tabular}
\caption{Empirical relative frequencies of coverage of $I_{n,a}$ for $\Delta_a$ at a nominal level of 0.9 for different $\Gamma(\beta,\beta)$ distributions (10000 replications).}\label{tab:coverage}  
\end{table}


\subsection{Neighbourhood-of-model validation}
A clear drawback in the field of goodness-of-fit testing is that if a level-$\alpha$-test does not lead to the rejection of the hypothesis $H_0$, the conclusion that $H_0$ is 'confirmed' is generally wrong. To overcome this problem, the results of \eqref{eq:altasyvert} lead to a so-called 'neighbourhood-of-model validation', see Subsection 3.3 in \cite{BEH:2017}. In this spirit one could see $\Delta_a$ as some sort of distance to the null hypothesis. If we argue to 'tolerate' a given value $\widetilde\Delta$, we can consider the testing problem
\begin{equation*}
    H_{\widetilde\Delta}:\Delta_a(F)\ge\widetilde\Delta\quad\mbox{versus}\quad K_{\widetilde\Delta}:\Delta_a(F)<\widetilde\Delta.
\end{equation*}
From \eqref{eq:altasyvert} we obtain an asymptotic level-$\alpha$-test by rejecting $H_{\widetilde\Delta}$ whenever
\begin{equation*}
    T_{n,a}\le n\widetilde\Delta-\sqrt{n}\widehat{\sigma}_{n,a}\Phi^{-1}(1-\alpha).
\end{equation*}
To prove this statement follow the reasoning in \cite{BEH:2017}, subsection 3.3.

\section{Asymptotic Bahadur efficiencies}\label{sec:ABE} 
In this section we only consider the case that $a>0$. We start the investigation by giving an alternative representation of $\Delta_a$ in Theorem \ref{thm:cons}. 
\begin{lemma}\label{lem:stoch_lim}
Suppose that $\mu=\mathbb{E}[X]$ and $\mathbb{E}[|X|^3]<\infty$. If $a>0$, we have
\begin{equation*}
\frac{T_{n,a}}n\cp\Delta_a=\frac2{a^3}-\frac1{a^3}\mathbb{E}\left[\left((1-|Y_1-Y_2|)(a^2+a)+a+2\right)\exp(-a Y_1\wedge Y_2)\right],
\end{equation*}
and if $a=0$, we have
\begin{equation*}
\frac{T_{n,0}}n\cp\Delta_0=\mathbb{E}\left[(Y_1\wedge Y_2)(Y_1-1)(Y_2-1)+(Y_1\wedge Y_2)^2\left(1-\frac12(Y_1+Y_2)\right)+\frac13(Y_1\wedge Y_2)^3\right],
\end{equation*}
where $Y_j=X_j/\mu$, $j=1,2$.
\end{lemma}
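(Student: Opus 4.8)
The plan is to take Theorem~\ref{thm:cons} as the starting point and simply evaluate the integral $\int_0^\infty z^2(y)\exp(-ay)\,\mathrm{d}y$ in closed form. By the scale invariance of $T_{n,a}$ we may replace the raw data by the standardised variables $Y_j=X_j/\mu$, which satisfy $\mathbb{E}[Y_1]=1$, so that Theorem~\ref{thm:cons} applies verbatim with $z(y)=\mathbb{E}[(Y_1-y-1)\mathbf{1}\{Y_1>y\}]$. The assumption $\mathbb{E}[|X|^3]<\infty$ guarantees that all the integrals appearing below are finite and that Fubini's theorem may be applied freely; this is the only place the cube-moment condition is needed.

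The key step is to linearise the square by introducing a second independent copy. Writing $z^2(y)$ as the product of the two factors evaluated at independent copies $Y_1,Y_2$ and using $\mathbf{1}\{Y_1>y\}\mathbf{1}\{Y_2>y\}=\mathbf{1}\{Y_1\wedge Y_2>y\}$, one obtains
$$z^2(y)=\mathbb{E}\big[(Y_1-y-1)(Y_2-y-1)\mathbf{1}\{Y_1\wedge Y_2>y\}\big].$$
Multiplying by $\exp(-ay)$, integrating over $y\in(0,\infty)$ and interchanging expectation and integration gives
$$\Delta_a=\mathbb{E}\left[\int_0^{Y_1\wedge Y_2}(Y_1-y-1)(Y_2-y-1)\exp(-ay)\,\mathrm{d}y\right].$$
The inner integral is that of a quadratic polynomial in $y$ against $\exp(-ay)$ over the finite interval $(0,Y_1\wedge Y_2)$, which for $a>0$ I would evaluate using the elementary primitives of $y^k\exp(-ay)$ for $k=0,1,2$, and for $a=0$ directly by integrating the polynomial term by term.

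It remains to simplify the resulting expression, and this final collection of terms is where the only genuine work lies. After the integration, the constant part of $\int_0^{Y_1\wedge Y_2}y^2\exp(-ay)\,\mathrm{d}y$ produces the term $2/a^3$, while every remaining contribution carries the factor $\exp(-a\,Y_1\wedge Y_2)$. Setting $M=Y_1\wedge Y_2$ and exploiting the symmetry of the integrand in $(Y_1,Y_2)$ — so that one may assume $Y_1\le Y_2$ and hence $M=Y_1$ and $Y_2-Y_1=|Y_1-Y_2|$ — the polynomial coefficients collapse: the terms in $M^2$ and in $Y_1Y_2$ cancel, and what survives as the coefficient of $\exp(-aM)/a^3$ is exactly $(a^2+a)|Y_1-Y_2|-a^2-2a-2$, which reproduces the claimed formula for $a>0$. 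The case $a=0$ is handled identically, the finite primitives being replaced by $\int_0^M y^k\,\mathrm{d}y=M^{k+1}/(k+1)$, whereupon the monomials $M$, $M^2$ and $M^3$ reassemble into the three summands of $\Delta_0$. The main obstacle is thus purely the algebraic bookkeeping in this last step; the analytic content is entirely supplied by Theorem~\ref{thm:cons} together with the elementary Fubini argument.
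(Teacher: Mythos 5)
Your proof follows the paper's argument exactly: apply Theorem~\ref{thm:cons} to the scaled variables $Y_j=X_j/\mu$, rewrite $z^2(y)$ as an expectation over two independent copies using $\mathbf{1}\{Y_1>y\}\mathbf{1}\{Y_2>y\}=\mathbf{1}\{Y_1\wedge Y_2>y\}$, interchange integral and expectation by Fubini, and evaluate $\int_0^{Y_1\wedge Y_2}(Y_1-y-1)(Y_2-y-1)e^{-ay}\,\mathrm{d}y$ elementarily; your final coefficient $(a^2+a)|Y_1-Y_2|-a^2-2a-2$ of $e^{-a\,Y_1\wedge Y_2}/a^3$ is correct, as is the $a=0$ case. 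One small imprecision in the bookkeeping: it is not true pointwise that every contribution besides $2/a^3$ carries the factor $e^{-a\,Y_1\wedge Y_2}$, since the constant parts of $\int_0^{Y_1\wedge Y_2}e^{-ay}\,\mathrm{d}y$ and $\int_0^{Y_1\wedge Y_2}y\,e^{-ay}\,\mathrm{d}y$ produce the extra terms $a^{-1}(Y_1-1)(Y_2-1)$ and $-a^{-2}\bigl((Y_1-1)+(Y_2-1)\bigr)$, which vanish only after taking expectations, using $\mathbb{E}[Y_j-1]=0$ and the independence of $Y_1$ and $Y_2$.
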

\begin{proof}
By Theorem \ref{thm:cons}, applied to the iid. random variables $Y_1,\ldots,Y_n$ with $\mathbb{E}[Y_1]=1$, we have 
\begin{equation*}
    \frac{T_n}n\cp \Delta_a=\int_0^\infty z^2(y)\exp(-ay)\,\mbox{d}y,
\end{equation*}
where $z(y)=\mathbb{E}[(Y_1-y-1)\mathbf{1}\{Y_1>y\}]$, $y>0$. Since $Y_1$ and $Y_2$ are independent, it follows that
\begin{equation*}
    z^2(y)=\mathbb{E}[(Y_1-y-1)(Y_2-y-1)\mathbf{1}\{Y_1\wedge Y_2>y\}].
\end{equation*}
Fubini's Theorem yields
\begin{eqnarray*}
    \Delta_a&=&\int_0^\infty\mathbb{E}[(Y_1-y-1)(Y_2-y-1)\mathbf{1}\{Y_1\wedge Y_2>y\}]\exp(-ay)\,\mbox{d}y\\
    &=&\mathbb{E}\left[\int_0^{Y_1\wedge Y_2}(Y_1-y-1)(Y_2-y-1)\exp(-ay)\,\mbox{d}y\right].
\end{eqnarray*}
Straightforward integration and some algebra concludes the proof in both cases.
\end{proof}
\begin{remark}
An alternative way of proving Lemma \ref{lem:stoch_lim} is to start by the representation \eqref{eq:expl} and to use symmetry arguments, the law of large numbers and Lebesgue's dominated convergence theorem, i.e. to adapt the lines of proof of Theorem 3.1 in \cite{EH:2021}.
\end{remark}
A useful tool for a theoretical comparison of the performance of two tests is the asymptotic relative Bahadur efficiency. This concept has been used throughout the literature on exponentiality tests, see \cite{CMO:2019,JMNOV:2015,M:2016,MO:2016,VN:2015}, and for more details on the theory we refer to \cite{B:1971,N:1995}. For a brief introduction to the concept, see Section 5 of \cite{JMO:2020}. In this spirit and using the same notations, we calculate the local approximate Bahadur slope of $T_{n,a}$. Assume that $0\in\Theta\subset\R$, where $\Theta$ is an open parameter space, and  $\mathcal{G}=\{G(x;\vartheta):\vartheta\in\Theta\}$ is a family of distribution functions with density $g(x;\vartheta)$, such that $\vartheta=0$ corresponds to the standard exponential density $g(x;0)=\exp(-x)$, $x>0$, and for each $\vartheta>0$ the density $g(x;\vartheta)$ is not a density corresponding to an exponential distribution in $\mathcal{E}$.  Moreover, we  assume that the regularity assumptions WD in \cite{NP:2004} are satisfied.
In the following assume that $X_1,X_2,\ldots$ are independent identical copies of $X$ following the distribution with density $g(\cdot;\vartheta)$ with existing expectation $\mathbb{E}[X]=\mu(\vartheta)=\int_0^\infty x g(x;\vartheta) \mbox{d}x$. Then Lemma \ref{lem:stoch_lim} yields
\begin{equation*}
\frac{T_{n,a}}n\stackrel{\mathbb{P}_\vartheta}{\longrightarrow} b_{T_a}(\vartheta),
\end{equation*}
where $\stackrel{\mathbb{P}_\vartheta}{\longrightarrow}$ denotes convergence in probability under the true parameter $\vartheta$, and \begin{equation*}
b_{T_a}(\vartheta)=\int_0^\infty\int_0^\infty h_a(x,y;\vartheta)g(x;\vartheta)g(y;\vartheta)\mbox{d}x\mbox{d}y
\end{equation*}
with
\begin{equation*}
a^3 h_a(x,y;\vartheta)=2-\left(\left(1-\frac{|x-y|}{\mu(\vartheta)}\right)(a^2+a)+a+2\right)\exp\left(-\frac{a}{\mu(\vartheta)}x\wedge y\right),\quad x,y>0,\;a>0,
\end{equation*}
and
\begin{equation*}
h_0(x,y;\vartheta)=\frac{x\wedge y}{\mu(\vartheta)}\left(\frac{x}{\mu(\vartheta)}-1\right)\left(\frac{y}{\mu(\vartheta)}-1\right)+\left(\frac{x\wedge y}{\mu(\vartheta)}\right)^2\left(1-\frac{x+y}{2\mu(\vartheta)}\right)+\frac1{3}\left(\frac{x\wedge y}{\mu(\vartheta)}\right)^3,\quad x,y>0.
\end{equation*}
Note that $\mu(0)=1$, $b_{T_a}(0)=0$, and after some algebra we have $b_{T_a}'(0)=0$. Here and in the following, all derivatives are calculated w.r.t. $\vartheta$. Writing $\mu_1=\mu'(0)$, the same reasoning as in Appendix B of \cite{CMO:2021} gives
\begin{eqnarray*}
    b_{T_a}''(0)&=& 2\int_0^\infty\int_0^\infty h_a(x,y;0)g'(x;0)g'(y;0)\mbox{d}x\mbox{d}y+4\mu_1\int_0^\infty\int_0^\infty h_a'(x,y;0)g(x;0)g'(y;0)\mbox{d}x\mbox{d}y\\
    && +\mu_1^2\int_0^\infty\int_0^\infty h_a''(x,y;0)g(x;0)g(y;0)\mbox{d}x\mbox{d}y.
\end{eqnarray*}
Expanding $b_{T_a}(\vartheta)$ into a Taylor series around $\vartheta_0=0$, we obtain
\begin{equation*}
    b_{T_a}(\vartheta)=\frac{b_{T_a}''(0)}{2}\vartheta^2+O(\vartheta^3),\quad\mbox{as}\;\vartheta\rightarrow 0.
\end{equation*}
From Section \ref{sec:Limit dist} we know that the limiting distribution of $T_n$ is $\|Z\|^2_{\mathbb{H}_a}=\sum_{j=1}^\infty \lambda_j(a) N_j^2,$ where $N_1,N_2,\ldots$ are iid. standard normal, and  $(\lambda_j(a))_{j\in\N}$ is the decreasing sequence of positive eigenvalues of the integral operator $\mathcal{K}_a$. Using the result in \cite{Z:1961},  the logarithmic tail behaviour of the limiting distribution of $\widetilde{T}_{n,a}=\sqrt{T_{n,a}}$ is
\begin{equation*}
    \log\left(1-F_{\widetilde{T}_a}(x)\right)=-\frac{x^2}{2\lambda_1(a)}+O(x^2),\quad x\rightarrow\infty.
\end{equation*}
Since the limit in probability of $\widetilde{T}_{n,a}/\sqrt{n}$ is $\sqrt{b_{T_a}(\vartheta)}$, the approximate local Bahadur slope is given by
\begin{equation*}
    c^*_{T_a}(\vartheta)=(\lambda_1(a))^{-1}b_{T_a}''(0)\vartheta^2+o(\vartheta^2),\quad \mbox{as}\,\vartheta\rightarrow0.
\end{equation*}
We compare the approximate Bahadur slope to the double Kullback-Leibler distance, also called Kullback-Leibler information numbers see \cite{NT:1996},
\begin{equation*}
    KL(g)=\int_0^\infty\frac{(g'(x;0))^2}{g(x;0)}\mbox{d}x-\left(\int_0^\infty G'(x;0)\,\mbox{d}x\right)^2,
\end{equation*}
where $G(x;\vartheta)=\int_0^xg(t,\vartheta)\mbox{d}t$ is the cumulative distribution function of $g(\cdot;\vartheta)$. It is well known that the Kullback-Leibler information numbers are an upper bound for Bahadur efficiencies, see \cite{B:1971,R:1970}. Hence we compute the approximate Bahadur efficiencies, given by
\begin{equation*}
    \mbox{eff}(g)=\frac{b_{T_a}''(0)}{2\lambda_1(a) KL(g)}.
\end{equation*}
These are equivalent to the comparison of the local approximate Bahadur slopes of $T_{n,a}$ and the likelihood ratio test as in \cite{CMO:2021}. Note that the largest eigenvalues $\lambda_1(a)$ are given in \eqref{eq:eig} and for easy reference tabulated in the first row of Table \ref{tab:eig}. To simplify the comparison to many well known competing procedures treated in \cite{CMO:2021}, we consider the following examples of distributions, all being a member of the class $\mathcal{G}$ and being standard references for the computation of Bahadur efficiencies of exponentiality tests, see Section 5 of \cite{N:1996}:
\begin{enumerate}
    \item the Weibull distribution with density
    \begin{equation*}
        g(x;\vartheta)=(1+\vartheta)x^\vartheta\exp\left(-x^{1+\vartheta}\right),\quad x\ge0,
    \end{equation*}
    and $KL(g)=1 - 2\gamma + \pi^2/6 + \gamma^2 - (1 - \gamma)^2$, where $\gamma=0.5772156649\ldots$ is the Euler-Mascheroni constant,
    \item the gamma distribution with density
    \begin{equation*}
        g(x;\vartheta)=x^\vartheta\exp\left(-x\right)/\Gamma(\vartheta+1),\quad x\ge0,
    \end{equation*}
    where $\Gamma(\cdot)$ denotes the gamma function, and $KL(g)=\pi^2/6 - 1$,
    
    \item a linear failure rate (LFR) distribution with density
    \begin{equation*}
        g(x;\vartheta)=(1+\vartheta x)\exp\left(-x-\vartheta x^2/2\right),\quad x\ge0,
    \end{equation*}
    and $KL(g)=1$,
    \item a mixture of exponential distributions with negative weights (EMNW$(\beta)$) with density
    \begin{equation*}
        g(x;\vartheta)=(1+\vartheta)\exp(-x)-\vartheta\beta\exp(-\beta x),\quad x\ge0,
    \end{equation*}
    and $KL(g)=16/45$ for $\beta=3$,
    \item and a Makeham distribution with density
    \begin{equation*}
        g(x;\vartheta)=(1 + \vartheta(1 - \exp(-x)))\exp\left(-x - \vartheta(x - 1 + \exp(-x))\right),\quad x\ge0,
    \end{equation*}
    and $KL(g)=1/12$.
\end{enumerate}

\begin{table}[t]
\centering
\begin{tabular}{lrrrrrr}
 Alt.$/a$& 0 & 1 & 2 & 3 & 4 & 5 \\ 
  \hline
  Weibull & 0.722 & 0.834 & 0.865 & 0.868 & 0.859 & 0.843 \\ 
  Gamma & 0.517 & 0.672 & 0.754 & 0.801 & 0.829 & 0.844 \\ 
  LFR & 0.917 & 0.731 & 0.592 & 0.495 & 0.424 & 0.371 \\ 
  EMNW$(3)$ & 0.765 & 0.940 & 0.987 & 0.982 & 0.954 & 0.917 \\ 
  Makeham & 0.918 & 0.987 & 0.948 & 0.884 & 0.818 & 0.757 
 \end{tabular}
\caption{Approximate Bahadur efficiencies $\mbox{eff}(g)$ of $T_{n,a}$ for different tuning parameters $a$.}\label{tab:eff}
\end{table}
The results are reported in Table \ref{tab:eff}. Interestingly, there is a clear dependence of the efficiency of the tests on the tuning parameter $a$ under all considered alternatives . Sometimes the highest efficiency is attained for the largest considered tuning parameter as in the gamma case, but for the LFR alternative for the lowest value of $a$. The Makeham alternative suggests to take 1 as best value for the tuning parameter. This behaviour is consistent with the empirical power study results in Tables 1 to 5 in \cite{BH:2008}. A comparison to the Bahadur efficiencies stated in Tables 3 and 4 in \cite{CMO:2021} for other tests of exponentiality shows that the considered procedures are competitive especially for the LFR and EMNW(3) alternatives. The results in Table \ref{tab:eff} confirm the suggestion of \cite{BH:2008} that a tuning parameter $a$ between 1 and 2 is a good choice, since it shows a robust approximate Bahadur efficiency over all considered alternatives.

\section{Conclusions and Outlook}\label{sec:conc}
We revisited the family of tests of exponentiality of \cite{BH:2008} and provided new insight into the asymptotic behaviour of the tests under fixed alternatives as well as local Bahadur efficiencies. These results facilitate the comparison of the performances to other well known tests of exponentiality. As a result we visualised the dependence of the power of the tests on the choice of the tuning parameter $a$. This effect might be controlled by implementing a data dependent choice of the tuning parameter due to \cite{T:2019}. We leave this investigation open for further research. 

We finish the article by pointing out other related open questions. 
As stated in Remark \ref{rem:Intro}, the family of tests based on the integrated distribution function presented in \cite{K:2001} is very close to the test statistic $T_{n,a}$. For this family of tests corresponding theoretical results as in Section \ref{sec:Limit_dist_fixed_alt} and \ref{sec:ABE} are missing. There is little hope to solve the eigenvalue problem in this case, but results regarding Bahadur efficiency may be obtained by approximating the largest eigenvalue numerically. Another open question is due to the conservative behaviour of the confidence interval $I_{n,a}$ in Subsection  \ref{subsec:conf}, which suggests that an improvement in view of the length of the confidence interval might be found. We leave this investigation open for further research.

In \cite{SAS:2019} the authors propose a test of exponentiality based on a characterisation of the exponential law by a conditional second moment equation involving the hazard rate. Theoretical insight into this family of tests are hitherto missing, so it would be interesting to see corresponding results, since the tests based on the mean residual life function are tests based on a characterisation by a first conditional moment equation. 

\section*{Acknowledgement} 
The author thanks Bernhard Klar and Bojana Milo{\v{s}}evi{\'{c}} for fruitful discussions, and is grateful to Norbert Henze for numerous suggestions that all led to an improvement of the paper. 

\bibliographystyle{apalike2}
\bibliography{references}

\begin{appendix}
\section{formulas for the functions in the estimator of the limiting variance}\label{app:form}
In this section we provide explicit formulas needed in Section \ref{sec:Limit_dist_fixed_alt}. We write for $x,y>0$ and $a>-1$
\begin{eqnarray*}
\upsilon_{1,a}(x,y)&=&\int_{0}^\infty(y-t-1)\mathbf{1}\{x\land y>t\}\exp(-at)\mbox{d}t,\\
\upsilon_{2,a}(x,y)&=&\int_{0}^\infty t(y-t-1)\mathbf{1}\{x\land y>t\}\exp(-at)\mbox{d}t,\quad \mbox{and}\\
\upsilon_{3,a}(x,y)&=&\int_0^\infty (y-t-1)(x-t-1)\mathbf{1}\{x\land y>t\}\exp(-at)\mbox{d}t.
\end{eqnarray*}
Straightforward integration and some calculations show for $a\not=0$
\begin{equation*}
    \upsilon_{1,a}(x,y)=a^{-2}\left[\left( a\left( x\land y \right)+1+ \left( 1-y \right) a
 \right) \exp(-a(x\land y))-1+ \left( y-1 \right)a\right]
\end{equation*}
and $\upsilon_{1,0}(x,y)=-(x\land y)  \left( -2y+(x\land y) +2
 \right)/2 $, $x,y>0$, as well as 
\begin{equation*}
    \upsilon_{2,a}(x,y)={a}^{-3}\left[\left(  \left( x\land y  \right) ^{2}{a}^{2}+
 \left(  \left( 1-y \right) {a}^{2}+2\,a \right) (x\land y) +2+ \left( 1-y \right) a \right)  \exp\left(-a(x\land y)\right)-2+ \left( y-1 \right) a\right]
\end{equation*}
and $\upsilon_{2,0}(x,y)=-\left( x\land y  \right) ^{2} \left( 2(x\land y)-3y+3 \right)/6 $, $x,y>0$. The third function integrates to
\begin{eqnarray*}
\upsilon_{3,a}(x,y)&=&a^{-3}\bigg[\big( -{a}^{2} \left( x\land y  \right) ^{2}+
a \left( -2+ \left( x+y-2 \right) a \right)\left( x \land y \right) -2
- \left( y-1 \right)  \left( x-1 \right) {a}^{2}\\
 &&+ \left( x+y-2
 \right) a \big)\exp\left(-a\left( x\land y \right) \right)+2+ \left( y- 1 \right)  \left( x-1 \right) {a}^{2}+ \left( -x-y+2 \right) a\bigg],
\end{eqnarray*}
where $\upsilon_{3,0}(x,y)=(x\land y)  \left(\left( x\land y  \right) ^{2}/3+ \left( -x/2-y/2+1 \right) (x\land y) +\left( y-1 \right)  \left( x-1 \right)  \right)$.

\section{\texttt{R} source code} \label{app:SC}
In the following we provide the source code written for the statistical computing language \texttt{R}, see \cite{R:2021}. An efficient implementation of the test statistic in \eqref{eq:expl} is given by the following code.

\begin{verbatim}
T.n.a<-function(data,a)
{
  n=length(data)
  data=data/mean(data)
  datam=matrix(data,n,n)
  pjl=datam+t(datam)
  mjl=(data-1)%*%(t(data)-1)
  minjl=pmin(datam,t(datam))
  if (a==0) {SUM=minjl^3/3-(pjl-2)*minjl^2/2+mjl*minjl} else {
  SUM=(-(datam-minjl-1)*(t(datam)-minjl-1)*exp(-a*minjl)+mjl)/a+((pjl
        -2*minjl-2)*exp(-a*minjl)-(pjl-2))/a^2+2*(1-exp(-a*minjl))/a^3}
  return(sum(SUM)/n)
}
\end{verbatim}

The following code can be used for approximation of the critical values of the test statistic by a Pearson system of distributions using the \texttt{R} package \texttt{PearsonDS}, see \cite{BK:2017}. 
\begin{verbatim}
#Cumulants from Remark 2.2
kappa_1<-function(a) {return(1/((a+1)*(a+2)))}
kappa_2<-function(a) {return(2/((a^2+3*a+2)*(2*a+3)*(a+2)))}
kappa_3<-function(a) {return(16/((a+1)*(a+2)^3*(2*a+3)*(3*a+4)))}
kappa_4<-function(a) {return(48*(11*a+16)/((a+1)*(a+2)^4*(2*a+3)^2*(3*a+4)*(4*a+5)))}

#The function provides the approximation of the 1-alpha quantile of the limiting
#distribution for a tuning parameter a
cv.T<-function(alpha,a)
{
  require(PearsonDS)
  kum=c(kappa_1(a),kappa_2(a),kappa_3(a),kappa_4(a))
  mom.a=c(kum[1:2],kum[3]*kum[2]^(-3/2),3+kum[4]*kum[2]^(-2))
  return(qpearson(1-alpha,moments=mom.a))
}
\end{verbatim}

\end{appendix}
\end{document}